\newtheorem{theorem}{Theorem}[section]
\newtheorem{proposition}[theorem]{Proposition}
\newtheorem{corollary}[theorem]{Corollary}
\newtheorem{remark}[theorem]{Remark}
\newtheorem{example}[theorem]{Example}
\newcommand{\1}{{\bm 1}}
\numberwithin{equation}{section}
\begin{document}

\begin{center}
\large\bf
Quadratic Embedding Constants of Strongly Regular Graphs
\end{center}

\bigskip

\begin{center}
Nobuaki Obata\\
Center for Data-driven Science and Artificial Intelligence \\
Tohoku University\\
Sendai 980-8576 Japan \\
obata@tohoku.ac.jp
\end{center}

\bigskip

\begin{quote}
\textbf{Abstract}\enspace
We obtain an explicit formula for the 
quadratic embedding constant (QEC) of a strongly regular graph
$\mathrm{srg}(n,k,\lambda,\mu)$ with $\mu\ge1$.
By using QEC we give a necessary and sufficient condition 
for a strongly regular graph to admit a quadratic embedding
in a Euclidean space.
\end{quote}

\begin{quote}
\textbf{Key words}\enspace
adjacency matrix,
distance matrix,
strongly regular graph,
quadratic embedding constant
\end{quote}

\begin{quote}
\textbf{MSC}\enspace
primary:05C50  \,\,  secondary:05C12 05C76
\end{quote}

\section{Introduction}
\label{01sec:Introduction}

Let $G=(V,E)$ be a finite connected graph on $n=|V|\ge2$ vertices,
and $D=[d(x,y)]_{x,y\in V}$ the distance matrix of $G$,
where $d(x,y)$ is the standard graph distance
defined as the length of a shortest
walk (or path) connecting $x$ and $y$.
Let $C(V)$ denote the space of real column vectors
$f=[f_x]_{x\in V}$ indexed by $V$ and
$\langle\cdot,\cdot\rangle$ the canonical inner product.
The \textit{quadratic embedding constant} 
(\textit{QE constant} for short) of $G$ is defined to be
the conditional maximum of
the quadratic function $\langle f,Df \rangle$,
$f\in C(V)$, subject to two constraints
$\langle f,f \rangle=1$ and $\langle \1,f \rangle=0$,
where $\bm{1}\in C(V)$ is the column vector whose entries are all one.
In short,
\begin{equation}\label{01eqn:def of QEC}
\mathrm{QEC}(G)
=\max\{\langle f,Df \rangle\,;\, f\in C(V), \,
\langle f,f \rangle=1, \, \langle \1,f \rangle=0\}.
\end{equation}
The QE constant was first introduced
in \cite{Obata-Zakiyyah2018}
keeping a profound relation to Euclidean distance geometry
and quantum probability.

A \textit{quadratic embedding} of 
a connected graph $G=(V,E)$ is 
a map $\psi:V\rightarrow \mathbb{R}^N$ satisfying
\[
\|\psi(x)-\psi(y)\|^2=d(x,y),
\qquad x,y\in V,
\]
where the left-hand side is the square of the Euclidean distance
and the right-hand side is the graph distance.
This concept is fundamental in Euclidean distance geometry
\cite{Balaji-Bapat2007,Deza-Laurent1997,
Jaklic-Modic2013,Jaklic-Modic2014,
Liberti-Lavor-Maculan-Mucherino2014},
tracing back to Schroenberg \cite{Schoenberg1935,Schoenberg1938}.
Moreover, it gives a criterion for a state associated with the
$Q$-matrix $Q=[q^{d(x,y)}]$ to be positive definite,
which is important in harmonic analysis and quantum probability,
see \cite{Hora-Obata2007} and references cited therein.
Here, as a consequence of Schoenberg's theorem,
we note that a graph $G=(V,E)$ admits a quadratic embedding
if and only if $\mathrm{QEC}(G)\le0$.
Such a graph is called \textit{of QE class}.
Graphs of both QE class and non-QE class are interesting
from various aspects.

In these years, 
beyond the original motivation 
towards quantitative approach to quadratic embedding of graphs,
there is growing interest in the QE constant
as a new numeric invariant of connected graphs.
For some classes of graphs, the QE constants are obtained explicitly,
for example, graphs on five or less vertices
\cite{Obata-Zakiyyah2018},
complete graphs and cycles \cite{Obata-Zakiyyah2018},
complete multipartite graphs \cite{Obata2023b},
double star graphs \cite{Choudhury-Nandi2023},
fan graphs \cite{MO-2024a, MO-2024b},
hairy cycle graphs \cite{Irawan-Sugeng2021},
paths \cite{Mlotkowski2022},
wheel graphs \cite{Obata2017}, and so on.
Moreover, formulas and estimates are investigated
in relation to some graph operations 
\cite{Choudhury-Nandi2023,Lou-Obata-Huang2022,MO-2018}.
It is a challenging problem to classify graphs
by using the QE constants,
for example, an attempt at classification of graphs
with $\mathrm{QEC}(G)<-1/2$ was initiated 
in \cite{Baskoro-Obata2021, Baskoro-Obata2024},
where graphs with certain block structures emerge.
On the other hand, it is also a highly non-trivial problem to
explore non-QE graphs and their QE constants
\cite{MSW2024,Obata2023a}.

With the above-mentioned backgrounds, in this short paper
we obtain an explicit formula for 
the QE constant of a strongly regular graph.
In fact, with the help of the well-known result
on the eigenvalues of the adjacency matrix and the fact that 
$\mathrm{QEC}(G)$ coincides with the second largest eigenvalue
of the distance matrix for a transmission regular graph $G$,
we will prove the following results.

\begin{theorem}\label{01thm:main formula}
Let $G=\mathrm{srg}(n,k,\lambda,\mu)$ be 
a strongly regular graph with $\mu\ge1$.
Then we have
\begin{equation}\label{01eqn:main formula}
\mathrm{QEC}(G)
=-2-\frac{\lambda-\mu-\sqrt{(\lambda-\mu)^2+4(k-\mu)}}{2}\,.
\end{equation}
\end{theorem}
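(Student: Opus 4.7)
The plan is to exploit the fact that a strongly regular graph with $\mu\ge1$ has diameter at most $2$: any two non-adjacent vertices share at least $\mu\ge1$ common neighbors and hence are at distance $2$. Consequently the distance matrix admits the closed form
\[
D = 2(J - I) - A,
\]
where $A$ is the adjacency matrix, $J$ is the all-ones matrix, and $I$ the identity. Since $G$ is $k$-regular of diameter $\le 2$, every vertex has transmission $k + 2(n-1-k)$, so $G$ is transmission regular. I would then invoke the fact (recalled just before the theorem) that for a transmission regular graph $\mathrm{QEC}(G)$ equals the second largest eigenvalue of $D$.

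The remainder is purely spectral. On the orthogonal complement $\1^{\perp}$ the matrix $J$ vanishes, so the restriction of $D$ to $\1^{\perp}$ equals $-2I - A$. By the classical spectrum of a strongly regular graph, the eigenvalues of $A$ on $\1^{\perp}$ are the two roots
\[
r,\,s \;=\; \frac{(\lambda-\mu)\pm\sqrt{(\lambda-\mu)^{2}+4(k-\mu)}}{2}
\]
of $x^{2}-(\lambda-\mu)x-(k-\mu)=0$. Hence the eigenvalues of $D$ on $\1^{\perp}$ are $-2-r$ and $-2-s$, and the larger one is $-2-s$, which is exactly the right-hand side of \eqref{01eqn:main formula}.

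There is essentially no hard step: once the diameter-$2$ identity is in place and the transmission regular characterization of QEC is invoked, the proof reduces to reading off eigenvalues. The only points that deserve a sentence of care are (i) verifying that the trivial eigenvalue $2(n-1)-k$ coming from the $\1$ direction is indeed excluded by the constraint $\langle\1,f\rangle=0$, and (ii) checking that $-2-s\ge -2-r$, so that the smaller root $s$ of the quadratic produces the QE constant. Both are immediate but should be stated explicitly to justify the sign pattern in \eqref{01eqn:main formula}.
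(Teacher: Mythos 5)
Your argument is correct, and its skeleton coincides with the paper's: diameter $2$ yields $D=2J-2I-A$, transmission regularity yields $\mathrm{QEC}(G)=\delta_2(G)$, and the answer is read off from the spectrum of $A$. The difference lies in how the relevant eigenvalue of $D$ is extracted. The paper first rewrites $D$ as a quadratic polynomial in $A$ alone (Proposition \ref{03prop:D=2J-2I-A}), applies the spectral mapping theorem to obtain the three eigenvalues $-r-2$, $-s-2$ and $2(n-1)-k$, and then verifies their ordering, the inequality $-s-2<2(n-1)-k$ costing a short but genuinely nontrivial computation (Proposition \ref{03prop:eigen(r)<eigen(s)<eigen(k)}). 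Your restriction of $D=2J-2I-A$ to $\1^{\perp}$, where $J$ vanishes and $A$ acts with eigenvalues $r$ and $s$ only, reaches $-2-s$ in one step and makes the comparison with the Perron eigenvalue unnecessary (it is guaranteed by Perron--Frobenius in any case); this is a genuine streamlining. The one point you should state explicitly is that \emph{both} $r$ and $s$ actually occur as eigenvalues of $A$ on $\1^{\perp}$, i.e.\ that $A$ has three distinct eigenvalues (equivalently, its minimal polynomial is cubic, the paper's Proposition \ref{02prop:minimal polynomial of A}); otherwise the maximum on $\1^{\perp}$ could a priori be $-2-r$. For a connected, non-complete strongly regular graph this is standard, so it is a one-sentence addition rather than a gap.
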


\begin{theorem}\label{01thm:QEC le 0}
Let $G=\mathrm{srg}(n,k,\lambda,\mu)$ be 
a strongly regular graph with $\mu\ge1$.
Then $\mathrm{QEC}(G)\le0$ if and only if $k-2\lambda+\mu\le 4$.
Moreover, 
$\mathrm{QEC}(G)=0$ if and only if $k-2\lambda+\mu=4$.
\end{theorem}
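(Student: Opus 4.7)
The plan is to derive both parts directly from the explicit formula in Theorem \ref{01thm:main formula} by an elementary algebraic manipulation, with one small subtlety in sign tracking when squaring.

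Starting from
\[
\mathrm{QEC}(G)=-2-\frac{(\lambda-\mu)-\sqrt{(\lambda-\mu)^2+4(k-\mu)}}{2},
\]
the inequality $\mathrm{QEC}(G)\le 0$ is, after multiplying by $2$ and rearranging, equivalent to
\[
\sqrt{(\lambda-\mu)^2+4(k-\mu)}\;\le\;4+(\lambda-\mu).
\]
First I would deal with the sign of the right-hand side. Using the standard inequalities $\lambda\le k-1$ and $\mu\le k$ that hold for any $\mathrm{srg}(n,k,\lambda,\mu)$, I would verify the following dichotomy: if $k-2\lambda+\mu\le 4$, then $4+(\lambda-\mu)\ge k-\lambda\ge 1>0$, so squaring is legitimate and yields the equivalence $k-2\lambda+\mu\le 4$ after routine simplification; conversely, if $4+(\lambda-\mu)<0$, then $\mu-\lambda>4$, and combined with $k\ge \lambda+1$ this forces $k-2\lambda+\mu>4$, while the displayed inequality above is clearly violated, so both sides of the claimed equivalence fail simultaneously.

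For the equality statement I would run the same computation: after squaring (which is justified by the parameter inequality above whenever one is near equality), $\mathrm{QEC}(G)=0$ reduces to $k-2\lambda+\mu=4$, and a sanity check shows that the converse implication also makes the sign of $4+(\lambda-\mu)$ positive (indeed equal to $k-\lambda\ge 1$), so the squared equality genuinely implies the original one and no spurious root is introduced.

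The main obstacle, such as it is, is merely bookkeeping on the sign of $4+(\lambda-\mu)$ before squaring; once the basic strongly-regular-graph bounds $\lambda\le k-1$ and $\mu\le k$ are invoked, everything else is a single line of algebra cancelling $(\lambda-\mu)^2$ on both sides. No new structural input about strongly regular graphs beyond Theorem \ref{01thm:main formula} and these elementary parameter bounds is needed.
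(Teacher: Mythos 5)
Your proposal is correct and follows essentially the same route as the paper: rewrite $\mathrm{QEC}(G)\le 0$ as $\sqrt{(\lambda-\mu)^2+4(k-\mu)}\le \lambda-\mu+4$, square, and use the parameter bound $\lambda\le k-1$ (the paper uses $\lambda\le k-2$ from Proposition \ref{02prop:bounds of parameters}) to see that $k-2\lambda+\mu\le 4$ already forces $\lambda-\mu+4\ge k-\lambda>0$, so squaring introduces no spurious solutions. Your two-branch ``dichotomy'' is not exhaustive as stated (it omits the case $k-2\lambda+\mu>4$ with $\lambda-\mu+4\ge 0$), but this is harmless: in the direction $\mathrm{QEC}(G)\le0\Rightarrow k-2\lambda+\mu\le4$ the nonnegativity of $\lambda-\mu+4$ is automatic from $\sqrt{\,\cdot\,}\ge0$, so the squaring step there needs no case analysis at all.
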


\begin{theorem}\label{01thm:QEC<0}
For any strongly regular graph
$G=\mathrm{srg}(n,k,\lambda,\mu)$ with $\mu\ge1$,
we have $\mathrm{QEC}(G)\ge0$ except $G=C_5=\mathrm{srg}(5,2,0,1)$,
and 
\[
\mathrm{QEC}(C_5)=-\frac{3-\sqrt{5}}{2}<0.
\]
\end{theorem}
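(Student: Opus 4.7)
The plan is to reformulate the problem in terms of the smallest non-trivial adjacency eigenvalue and then exploit the classical dichotomy for the spectrum of a strongly regular graph. Let $\tau$ denote the smaller root of $x^2 - (\lambda - \mu)x - (k - \mu) = 0$, so that Theorem~\ref{01thm:main formula} reads $\mathrm{QEC}(G) = -2 - \tau$. Hence $\mathrm{QEC}(G) < 0$ if and only if $\tau > -2$, and my task reduces to showing that the only $\mathrm{srg}(n,k,\lambda,\mu)$ with $\mu \ge 1$ satisfying $\tau > -2$ is $C_5 = \mathrm{srg}(5,2,0,1)$.

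The coefficients of $x^2 - (\lambda-\mu)x - (k-\mu)$ are integers, so by the rational root theorem $\theta$ and $\tau$ are either both integers or an irrational conjugate pair, and I would analyse the two cases separately. In the irrational case the multiplicities must coincide, $m_\theta = m_\tau = (n-1)/2$; the trace identity $k + (n-1)(\theta + \tau)/2 = 0$ together with $\theta + \tau = \lambda - \mu$ gives $k = (n-1)(\mu - \lambda)/2$. Since $\mu - \lambda$ is a positive integer and $k \le n - 1$, only $\mu - \lambda = 1$ is admissible (the value $2$ forces $k = n - 1$, i.e.\ $G = K_n$, which is not an SRG with $\mu \ge 1$). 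Thus $G$ is a conference graph with parameters $(4\mu+1,\, 2\mu,\, \mu-1,\, \mu)$ and $\tau = -(1 + \sqrt{4\mu+1})/2$; the condition $\tau > -2$ then reduces to $4\mu + 1 < 9$, forcing $\mu = 1$, and hence $G = C_5$.

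In the integer case $\tau$ is a negative integer, so $\tau > -2$ forces $\tau = -1$; but the classical characterization of graphs with smallest adjacency eigenvalue $-1$ (equivalently, graphs avoiding $P_3$ as an induced subgraph) forces $G$ to be a disjoint union of cliques, and connectedness (implied by $\mu \ge 1$) then yields $G = K_n$, again excluded. Combining the two cases, $C_5$ is the unique exception, and substituting $(k,\lambda,\mu) = (2,0,1)$ into \eqref{01eqn:main formula} gives $\mathrm{QEC}(C_5) = -2 + (1 + \sqrt{5})/2 = -(3 - \sqrt{5})/2$. I expect the main obstacle to lie in the irrational case, specifically in showing cleanly that $\mu - \lambda = 1$ is forced (and ruling out the boundary value $\mu-\lambda = 2$), together with the careful invocation of the $P_3$-free characterization in the integer case; both steps are standard, but they should be written out explicitly to cover the edge cases.
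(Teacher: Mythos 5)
Your proposal is correct, and its overall architecture (dichotomy between the conference-graph case and the integer-eigenvalue case) matches the paper's; the difference lies in how each branch is entered and how the integer case is closed. The paper splits on whether $2k+(n-1)(\lambda-\mu)=0$, invoking Propositions \ref{02prop:k,r,s are integers} and \ref{02prop:conference graph}, whereas you split on whether the non-principal eigenvalues are integers or irrational conjugates and rederive the conference parameters from the equal-multiplicity condition; these are essentially equivalent routes (a conference graph with $n$ a perfect square lands in different branches in the two proofs, but is handled correctly either way). The genuine divergence is in the integer case: the paper quotes the QEC-specific result of Baskoro--Obata that $\mathrm{QEC}(G)\ge-1$ for every connected graph with equality only for complete graphs, so that an integral $\mathrm{QEC}(G)<0$ forces $G=K_n$; you instead argue on the adjacency side, noting $\tau>-2$ forces $\tau=-1$ and invoking the classical fact that smallest adjacency eigenvalue $-1$ (equivalently, $P_3$-freeness) characterizes disjoint unions of cliques. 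Your route stays within standard spectral graph theory rather than importing a QEC-specific lower bound; incidentally, it can be made entirely internal to the paper's toolkit, since $s=-1$ combined with $s+r=\lambda-\mu$ and $sr=\mu-k$ yields $\lambda=k-1$, contradicting $\lambda\le k-2$ from Proposition \ref{02prop:bounds of parameters}. Both approaches are sound; the paper's is shorter given its citation, yours is more self-contained in spirit.
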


The paper is organized as follows.
In Section \ref{02sec:Strongly Regular Graphs} 
we assemble basic notions and notations
of strongly regular graphs,
and in Section \ref{03sec:Distance Matrices}
we recall some properties of their adjacency matrices.
In Section \ref{04sec:Distance Matrices} we obtain the 
eigenvalues of the distance matrix
in terms of those of the adjacency matrix.
In Section \ref{05sec:Quadratic Embedding Constants}
we prove the main results
(Theorems \ref{01thm:main formula}--\ref{01thm:QEC<0}).
In Section \ref{06sec:Examples} we show some examples.

\section{Strongly Regular Graphs}
\label{02sec:Strongly Regular Graphs}

To avoid ambiguity, we begin with some basic notions and notations.
A \textit{graph} $G=(V,E)$ is a pair of a finite non-empty set $V$
and a set $E$ of two-element subsets of $V$.
As usual, an element of $V$ is called a \textit{vertex}
and one of $E$ an \textit{edge}.
Two vertices $x,y\in V$ are called \textit{adjacent} if
$\{x,y\}\in E$, and in that case we write $x\sim y$. 
We also say that 
$y\in V$ is a \textit{neighbour} of $x\in V$ if $x\sim y$.

A graph $G=(V,E)$ is called \textit{regular} if 
every vertex has the same number of neighbors,
i.e.,
\begin{equation}\label{01eqn:degree of G}
k=|\{y\in V\,;\, y\sim x\}|
\end{equation}
is constant, regardless of the choice of $x\in V$.
In that case, $k$ is called the \textit{degree} of $G$,
and $G$ is also called a \textit{$k$-regular graph}.
A regular graph $G=(V,E)$ is called \textit{strongly regular} if 
\begin{description}
\item[(SR1)] every two adjacent vertices have the 
same number of common neighbours, i.e.,
\begin{equation}\label{01eqn:def of lambda}
\lambda=|\{z\in V\,;\, z\sim x, \,\, z\sim y\}|
\end{equation}
is constant, regardless of the choice of $x,y\in V$
such that $x\sim y$;
\item[(SR2)] every two non-adjacent vertices have the 
same number of common neighbours, i.e.,
\begin{equation}\label{01eqn:def of mu}
\mu=|\{z\in V\,;\, z\sim x, \,\, z\sim y\}|
\end{equation}
is constant, regardless of the choice of $x,y\in V$
such that $x\not\sim y$ and $x \neq y$.
\end{description}
Those numbers appearing in conditions (i) and (ii)
together with $n=|V|$ and the degree $k$
are characteristics of a strongly regular graph $G$,
and we write $G=\mathrm{srg}(n,k,\lambda,\mu)$ for convenience.
It is noted, however, that the parameters
$n,k,\lambda$ and $\mu$ do
not specify a strongly regular graph uniquely
(up to graph isomorphisms).

\begin{remark}\label{02rem:empty graphs}
\normalfont
Condition (i) is fulfilled 
if there is no pair of adjacent vertices,
though the number $\lambda$ is not determined.
This occurs if and only if $G$ is
an empty graph $\bar{K}_n$ with $n\ge1$. 
\end{remark}

\begin{remark}\label{02rem:complete graphs}
\normalfont
Condition (ii) is fulfilled 
if there is no pair of non-adjacent vertices,
though the number $\mu$ is not determined.
This occurs if and only if 
any pair of distinct vertices are adjacent,
that is, $G$ is a complete graph $K_n$ with $n\ge1$.
\end{remark}

It is convenient to refer to a general strongly regular graph
$G$ with its parameter $\mathrm{srg}(n,k,\lambda,\mu)$
even though 
$\lambda$ is not determined (Remark \ref{02rem:empty graphs})
or 
$\mu$ is not determined (Remark \ref{02rem:complete graphs}).
In this connection, we note the following result
which is verified easily by definition.

\begin{proposition}\label{02prop:mu=0}
For a strongly regular graph $G=(V,E)$ the following three conditions
are equivalent:
\begin{enumerate}
\setlength{\itemsep}{0pt}
\item[\upshape (i)] $G$ is disconnected;
\item[\upshape (ii)] $\mu=0$
(this presupposes that $\mu$ is determined);
\item[\upshape (iii)] $G=pK_q$ with $p\ge2$ and $q\ge1$;
\end{enumerate}
\end{proposition}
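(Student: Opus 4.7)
The plan is to prove the cyclic chain of implications (iii) $\Rightarrow$ (i) $\Rightarrow$ (ii) $\Rightarrow$ (iii), which gives equivalence of all three conditions. The first two implications are essentially immediate from the definitions; the real content is in (ii) $\Rightarrow$ (iii).

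For (iii) $\Rightarrow$ (i), the graph $pK_q$ with $p\ge2$ is by construction the disjoint union of $p\ge2$ cliques, hence disconnected. For (i) $\Rightarrow$ (ii), if $G$ is disconnected, pick vertices $x,y$ lying in distinct connected components. Then $x\not\sim y$ (so the parameter $\mu$ is actually meaningful), and no vertex $z$ can be adjacent to both $x$ and $y$ since such a $z$ would place $x$ and $y$ in the same component. So the common-neighbour count for $\{x,y\}$ is zero, and by (SR2) this count equals $\mu$, giving $\mu=0$.

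The substantive step is (ii) $\Rightarrow$ (iii). Fix any vertex $x\in V$ and consider its neighbourhood $N(x)$, which has size $k$ by regularity. The key observation is that $N(x)$ must be a clique: if $y,z\in N(x)$ were distinct and non-adjacent, then $x$ itself would be a common neighbour of the non-adjacent pair $\{y,z\}$, forcing $\mu\ge1$ and contradicting $\mu=0$. Next I would argue that $\{x\}\cup N(x)$ is an entire connected component. For this, suppose $w\notin\{x\}\cup N(x)$ is adjacent to some $y\in N(x)$; then $w\not\sim x$, but $y$ is a common neighbour of the non-adjacent pair $\{w,x\}$, again contradicting $\mu=0$. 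Hence the connected component containing $x$ is precisely the clique $\{x\}\cup N(x)$, of size $k+1$. Since $x$ was arbitrary and every vertex has degree $k$, every component is a copy of $K_{k+1}$, so $G=pK_q$ with $q=k+1$. Finally, $p\ge2$ follows from the fact that condition (ii) presupposes $\mu$ is defined, which by Remark 2.3 requires the existence of a non-adjacent pair of vertices; a single clique has no such pair.

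I expect the mild subtlety to be the role of the ``$\mu$ is determined'' assumption in the statement, which ensures that we are not in the degenerate complete-graph case of Remark 2.3 and which forces $p\ge2$ at the end of the argument. Aside from that bookkeeping, the proof is a short combinatorial consequence of regularity combined with the forbidden-configuration reading of $\mu=0$.
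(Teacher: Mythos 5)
Your proof is correct. The paper does not actually supply an argument for this proposition --- it is stated with the remark that it ``is verified easily by definition'' --- and your cyclic chain (iii) $\Rightarrow$ (i) $\Rightarrow$ (ii) $\Rightarrow$ (iii) is exactly the standard way to fill in those details: the neighbourhood-is-a-clique and no-escape-from-$\{x\}\cup N(x)$ observations for (ii) $\Rightarrow$ (iii) are the only non-trivial steps, and you handle the ``$\mu$ is determined'' bookkeeping (which forces $p\ge2$) correctly.
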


First of all, the QE constant is defined only for
a connected graph on two or more vertices,
see Introduction.
Moreover, the QE constant of the complete graph $K_n$ is 
known as $\mathrm{QEC}(K_n)=-1$ for $n\ge2$, 
see \cite{Obata-Zakiyyah2018}.
Thus, our target is the class of strongly regular graphs
which are connected and are not complete graphs.
As is mentioned in Remark \ref{02rem:complete graphs},
the strongly regular graphs $\mathrm{srg}(n,k,\lambda,\mu)$
with undetermined $\mu$ are the complete graphs.
It follows from Proposition \ref{02prop:mu=0} that
the strongly regular graphs $\mathrm{srg}(n,k,\lambda,\mu)$
which are not connected are characterized as $\mu=0$.
Therefore, the class of strongly regular graphs
$\mathrm{srg}(n,k,\lambda,\mu)$ 
which are connected and are not complete graphs
is characterized by $\mu\ge1$.

Below we list some properties of strongly regular graphs
$\mathrm{srg}(n,k,\lambda,\mu)$ with $\mu\ge1$.
The proofs are known and straightforward,
see e.g., \cite{Brouwer-Cohen-Neumaier1989,Brouwer-Haemers2012,
Brouwer-VanMaldeghem2022}.

\begin{proposition}\label{02prop:diam=2}
For a strongly regular graph $G=\mathrm{srg}(n,k,\lambda,\mu)$
with $\mu\ge1$, we have
\[
\mathrm{diam}(G)=2,
\]
where $\mathrm{diam}(G)=\max\{d(x,y)\,;\, x,y\in V\}$
is the diameter of $G$.
\end{proposition}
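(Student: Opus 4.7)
The plan is to establish the two inequalities $\mathrm{diam}(G)\ge 2$ and $\mathrm{diam}(G)\le 2$ separately; both should be short and follow almost directly from the defining conditions of a strongly regular graph, together with the standing hypothesis $\mu\ge 1$.

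For the lower bound, I would first argue that $G$ is not a complete graph. By Remark \ref{02rem:complete graphs}, the parameter $\mu$ is undefined precisely for complete graphs, so the very assumption $\mu\ge 1$ presupposes that $G$ is not complete. Hence there exist two distinct vertices $x,y\in V$ with $x\not\sim y$, and consequently $d(x,y)\ge 2$. This immediately yields $\mathrm{diam}(G)\ge 2$.

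For the upper bound, I would take any two distinct vertices $x,y\in V$ and split into cases. If $x\sim y$ then $d(x,y)=1$. If $x\not\sim y$, I invoke condition (SR2): since $\mu\ge 1$, the pair $\{x,y\}$ has at least one common neighbour $z\in V$, so the walk $x\sim z\sim y$ has length $2$, giving $d(x,y)\le 2$. In either case $d(x,y)\le 2$, so $\mathrm{diam}(G)\le 2$. Combining the two bounds proves the claim; along the way we also see that $G$ is connected, consistent with Proposition \ref{02prop:mu=0}.

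There is essentially no serious obstacle here: the only point worth being slightly careful about is the existence of a non-adjacent pair of vertices, which I would justify by citing Remark \ref{02rem:complete graphs} rather than by an independent combinatorial argument. The entire proof should fit in a few lines.
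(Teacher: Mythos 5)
Your proof is correct and is exactly the standard argument the paper has in mind; the paper itself omits the proof of this proposition, stating only that it is ``known and straightforward'' and citing the references on strongly regular graphs. Both halves of your argument are sound: the hypothesis $\mu\ge1$ presupposes that $\mu$ is determined, hence (by Remark~\ref{02rem:complete graphs}) that a non-adjacent pair exists, giving $\mathrm{diam}(G)\ge2$; and condition (SR2) with $\mu\ge1$ supplies a common neighbour for every non-adjacent pair, giving $\mathrm{diam}(G)\le2$.
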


\begin{proposition}\label{02prop:bounds of parameters}
For a strongly regular graph $G=\mathrm{srg}(n,k,\lambda,\mu)$
with $\mu\ge1$, we have
\begin{equation}\label{02eqn:parameter conditions}
n\ge 4, \qquad
2\le k\le n-2, \qquad
1\le \mu \le k, \qquad
0\le \lambda\le k-2.
\end{equation}
\end{proposition}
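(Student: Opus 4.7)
The plan is to exploit the characterization established just before the statement, namely that the hypothesis $\mu\ge1$ is equivalent to $G$ being connected and non-complete. Each of the inequalities then reduces to a short combinatorial observation, and $n\ge4$ will come out as a corollary of the bounds on $k$.

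First I would handle the extremes on $k$. Since $G$ is not complete there exist two distinct non-adjacent vertices $x,y$, and then $y\notin N(x)\cup\{x\}$ gives $k=|N(x)|\le n-2$. For the lower bound, the case $k=0$ is excluded because $G$ would then be empty and hence disconnected for $n\ge2$, and the case $k=1$ is excluded because a $1$-regular graph is a disjoint union of edges, so connectedness would force $G=K_2$, which is complete. Both cases contradict $\mu\ge1$, so $k\ge2$. Combining the two bounds immediately gives $n\ge k+2\ge4$.

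Next I would dispatch the bounds on $\mu$ and the trivial side of the bounds on $\lambda$. The lower bound $\mu\ge1$ is the hypothesis itself. For the upper bound, fix any non-adjacent pair $x\neq y$ (available because $G$ is not complete); the common neighbours of $x,y$ lie in $N(x)$, so $\mu\le|N(x)|=k$. Likewise, $\lambda\ge0$ is immediate since $\lambda$ counts a set of vertices.

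The main obstacle is the sharp bound $\lambda\le k-2$, since the naive counting argument only gives $\lambda\le k-1$ (the common neighbours of adjacent vertices $x,y$ lie in $N(x)\setminus\{y\}$). To upgrade this I would argue by contradiction. If $\lambda=k-1$, then for every edge $\{x,y\}$ each neighbour of $x$ other than $y$ is also a neighbour of $y$, so the closed neighbourhoods $N(x)\cup\{x\}$ and $N(y)\cup\{y\}$ coincide. Since $G$ is connected, propagating this equality along the edges of any spanning tree forces all closed neighbourhoods to coincide, which means $G=K_n$ and contradicts $\mu\ge1$. Hence $\lambda\le k-2$, completing the list of inequalities in \eqref{02eqn:parameter conditions}.
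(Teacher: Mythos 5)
Your proof is correct. Note that the paper itself gives no proof of this proposition: it is stated with the remark that ``the proofs are known and straightforward'' and a pointer to the standard references on strongly regular graphs, so there is no argument in the paper to compare against. Your write-up supplies exactly the standard elementary argument: connectedness and non-completeness (available from the discussion around Proposition~\ref{02prop:mu=0}) give $2\le k\le n-2$ and hence $n\ge4$; the containment of the relevant common-neighbour sets in $N(x)$ gives $\mu\le k$ and $\lambda\le k-1$; and the only genuinely non-trivial point, sharpening to $\lambda\le k-2$, is handled correctly by showing that $\lambda=k-1$ forces all closed neighbourhoods to coincide along edges, which by connectedness collapses $G$ to $K_n$. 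One remark: the bound $\lambda\le k-2$ also falls out of the counting identity $(n-k-1)\mu=(k-\lambda-1)k$ of Proposition~\ref{02prop:relation among parameters}, since $\mu\ge1$ and $n-k-1\ge1$ force $k-\lambda-1\ge1$; that route is shorter but leans on a fact the paper only states afterwards, so your self-contained neighbourhood argument is arguably the cleaner choice at this point in the exposition.
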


\begin{proposition}\label{02prop:relation among parameters}
For a strongly regular graph $G=\mathrm{srg}(n,k,\lambda,\mu)$
with $\mu\ge1$, we have
\begin{equation}\label{02eqn:relation among parameters}
(n-k-1)\mu=(k-\lambda-1)k.
\end{equation}
\end{proposition}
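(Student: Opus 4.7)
The plan is a standard double counting argument. Fix an arbitrary vertex $x \in V$ and count in two ways the cardinality of the set
\[
S_x = \{(y,z) \in V \times V \,;\, y \sim x, \,\, z \sim y, \,\, z \neq x, \,\, z \not\sim x\}.
\]
That is, $S_x$ consists of length-two walks $x \to y \to z$ that terminate at a vertex $z$ which is distinct from, and not adjacent to, $x$.

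First I would count $|S_x|$ by grouping according to the first step $y$. Since $G$ is $k$-regular there are exactly $k$ choices for $y \sim x$. For each such $y$, its $k$ neighbours split into three disjoint classes: the vertex $x$ itself (one vertex), the common neighbours of $x$ and $y$ (exactly $\lambda$ vertices by (SR1)), and the remaining $k-1-\lambda$ neighbours of $y$, which are neither $x$ nor adjacent to $x$. The last class is precisely the set of admissible $z$. This gives $|S_x| = k(k-\lambda-1)$.

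Next I would count $|S_x|$ by grouping according to $z$. The admissible values of $z$ are exactly the vertices satisfying $z \neq x$ and $z \not\sim x$, of which there are $n - 1 - k$. For each such $z$, condition (SR2) says that $x$ and $z$ have exactly $\mu$ common neighbours, and these are precisely the admissible $y$'s. Hence $|S_x| = (n-k-1)\mu$. Equating the two expressions yields the required identity \eqref{02eqn:relation among parameters}.

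There is no serious obstacle; the only mild subtlety is ensuring that the partition of the neighbours of $y$ used in the first count is genuinely a disjoint union, which follows because $x \not\sim x$ and $x$ is not counted as a common neighbour of $x$ and $y$. One should also note that the hypothesis $\mu \ge 1$ is not strictly needed for the identity itself (the argument goes through whenever both (SR1) and (SR2) are meaningful), but it guarantees the existence of non-adjacent distinct pairs so that the set whose count gives the right-hand side is well-defined via (SR2).
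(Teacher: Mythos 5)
Your proof is correct. The paper itself omits the proof of this proposition, deferring to standard references, and your double-counting of the length-two walks $x\to y\to z$ ending at a non-neighbour of $x$ is exactly the standard argument given there; the bookkeeping (in particular excluding $x$ and the $\lambda$ common neighbours from the second step) is handled correctly.
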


\begin{remark}\normalfont
Both sides of \eqref{02eqn:relation among parameters}
do not vanish for any $G=\mathrm{srg}(n,k,\lambda,\mu)$ with $\mu\ge1$.
For a general strongly regular graph 
the relation \eqref{02eqn:relation among parameters}
is accepted as $0=0$, though $\lambda$ or $\mu$ may not be defined.
\end{remark}

\section{Adjacency Matrices}
\label{03sec:Distance Matrices}

The adjacency matrix $A$ of a graph $G=(V,E)$ is
a matrix with index set $V\times V$ defined by
\[
(A)_{xy}=\begin{cases}
1, & \text{if $x\sim y$}, \\
0, & \text{otherwise},
\end{cases}
\qquad x,y \in V.
\]
Below we list some results on the 
adjacency matrices of strongly regular graphs.
The proofs are known, see e.g.,
\cite{Brouwer-Cohen-Neumaier1989,Brouwer-Haemers2012,
Brouwer-VanMaldeghem2022}.

\begin{proposition}\label{02prop:expression A2}
Let $A$ be the adjacency matrix of a strongly regular graph 
$G=\mathrm{srg}(n,k,\lambda,\mu)$ with $\mu\ge1$.
Then we have
\begin{equation}\label{02eqn:expression A2}
A^2=\mu J-(\mu-\lambda)A-(\mu-k)I,
\end{equation}
where $J$ is the all-one matrix and $I$ the identity matrix
(their sizes are understood in the context).
\end{proposition}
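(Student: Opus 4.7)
The plan is to prove the identity entry by entry. For any graph, $(A^2)_{xy}=\sum_{z\in V} A_{xz}A_{zy}$ counts the number of vertices $z\in V$ that are simultaneously adjacent to $x$ and to $y$. I would split the ordered pairs $(x,y)\in V\times V$ into three disjoint cases according to the relation between $x$ and $y$: (a) $x=y$; (b) $x\neq y$ with $x\sim y$; (c) $x\neq y$ with $x\not\sim y$.

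In case (a) the count is the number of neighbours of $x$, which equals $k$ by $k$-regularity. In case (b) it equals $\lambda$ by condition (SR1). In case (c) it equals $\mu$ by condition (SR2); the hypothesis $\mu\ge1$ guarantees that $G$ is not complete, so case (c) genuinely occurs and $\mu$ is a well-defined parameter.

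Next I would read off the corresponding entry of the right-hand side $\mu J-(\mu-\lambda)A-(\mu-k)I$, using $(J)_{xy}=1$, the definition of $A$, and $(I)_{xy}=\delta_{xy}$. In the three cases the right-hand side evaluates to
\[
\mu-(\mu-k)=k,\qquad \mu-(\mu-\lambda)=\lambda,\qquad \mu,
\]
respectively, matching the left-hand side. Since the three cases exhaust all pairs of $V\times V$, the matrix identity follows.

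This is a short routine case analysis with no real obstacle; the only point requiring a moment's attention is that $\mu\ge1$ ensures case (c) is non-empty, so that $\mu$ is meaningfully defined, as already noted in the remarks preceding the proposition.
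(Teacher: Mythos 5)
Your proof is correct and is essentially the paper's argument: the paper writes the same three-case count of common neighbours as the matrix identity $A^2=kI+\lambda A+\mu\bar{A}$ and then substitutes $\bar{A}=J-I-A$, which is exactly your entry-wise verification in compact form. No issues.
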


\begin{proof}
Let $\bar{G}=(V, \bar{E})$ be the complement of $G=(V,E)$
and $\bar{A}$ its adjacency matrix.
Then by definition we have
\begin{equation}\label{02eqn:bar A}
\bar{A}=J-I-A.
\end{equation}
Since $G=\mathrm{srg}(n,k,\lambda,\mu)$ is a strongly regular graph,
observing that
$(A^2)_{xy}$ counts the number of 2-step walks from $x$ to $y$,
we obtain
\begin{equation}\label{02eqn:A2 by definition}
A^2=kI+\lambda A+\mu\bar{A}.
\end{equation}
Then \eqref{02eqn:expression A2} follows from
\eqref{02eqn:bar A} and \eqref{02eqn:A2 by definition}.
\end{proof}

\begin{proposition}\label{02prop:minimal polynomial of A}
Let $A$ be the adjacency matrix of 
a strongly regular graph $G=\mathrm{srg}(n,k,\lambda,\mu)$ with $\mu\ge1$.
Then we have
\begin{equation}\label{02eqn:minimal polynomial of A}
(A-k)(A^2+(\mu-\lambda)A+(\mu-k)I)=0.
\end{equation}
Moreover, the above cubic polynomial is the minimal
polynomial of $A$.
\end{proposition}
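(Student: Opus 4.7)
The plan is to derive both parts from Proposition \ref{02prop:expression A2} combined with the $k$-regularity of $G$.

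For the cubic annihilation, I would first rewrite Proposition \ref{02prop:expression A2} as
\[
A^2+(\mu-\lambda)A+(\mu-k)I=\mu J.
\]
Left-multiplying by $A-kI$ reduces the claim to showing $(A-kI)J=0$. This is immediate from $k$-regularity: since $A\1=k\1$ and every column of $J$ equals $\1$, we have $AJ=kJ$.

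For the minimality claim, I would use that $A$ is real symmetric, so its minimal polynomial equals $\prod(x-\alpha)$ taken over the \emph{distinct} eigenvalues of $A$. It therefore suffices to show $A$ has exactly three distinct eigenvalues. One eigenvalue is $k$ (eigenvector $\1$); the two remaining candidates are the roots $\theta_{\pm}$ of $x^2+(\mu-\lambda)x+(\mu-k)=0$. Using $\lambda\le k-2$ and $\mu\le k$ from Proposition \ref{02prop:bounds of parameters}, the discriminant $(\mu-\lambda)^2+4(k-\mu)$ is strictly positive (the only potential failure case $\mu=\lambda=k$ is ruled out by $\lambda\le k-2$), so $\theta_+\neq\theta_-$; and substituting $x=k$ into the quadratic yields $k(k-\lambda-1)+\mu(k+1)>0$, so $k\neq\theta_{\pm}$.

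The remaining—and in my view the subtlest—step is to exclude the possibility that the minimal polynomial has degree $\le 2$. Degree $1$ would force $A=kI$, impossible for any graph with at least one edge and one non-edge. For degree $2$, one would have $(A-kI)(A-\theta I)=0$ for some $\theta\in\{\theta_{\pm}\}$, hence $A^2=(k+\theta)A-k\theta I$; substituting into $A^2+(\mu-\lambda)A+(\mu-k)I=\mu J$ would express $\mu J$ as a linear combination of $A$ and $I$. Reading off the entry indexed by any pair of distinct non-adjacent vertices—which exists since $\mu\ge1$ forces $G$ to be non-complete but connected—gives $\mu=0$, contradicting $\mu\ge1$. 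This last step is the genuine pressure point where the standing hypothesis $\mu\ge1$ (ensuring both adjacent and non-adjacent pairs coexist) is actually used.
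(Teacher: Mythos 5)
Your proof is correct and follows essentially the same route as the paper: the annihilation identity comes from $A^2=\mu J-(\mu-\lambda)A-(\mu-k)I$ together with $AJ=kJ$, and minimality is settled by ruling out a quadratic (or linear) annihilating polynomial, with the decisive contradiction coming from reading off a non-adjacent off-diagonal entry and using $\mu\ge1$. The paper only sketches this last step (``show $A^2+pA+qI\neq0$ for any $p,q$''); your write-up supplies the details, and they check out.
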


\begin{proof}
Relation \eqref{02eqn:minimal polynomial of A} follows
from Proposition \ref{02prop:expression A2} 
and $AJ=JA=kJ$.
The last statement follows 
by showing that $A^2+pA+qI\neq0$ for any $p,q\in\mathbb{R}$.
\end{proof}

\begin{proposition}\label{02prop:eigenvalues of A}
Let $G$ be a strongly regular graph $\mathrm{srg}(n,k,\lambda,\mu)$ with $\mu\ge1$.
Then the eigenvalues of its adjacency matrix are given by
$\{s,r,k\}$, where
\begin{align*}
s&=\frac{(\lambda-\mu)-\sqrt{(\lambda-\mu)^2+4(k-\mu)}}{2}\,,
\\
r&=\frac{(\lambda-\mu)+\sqrt{(\lambda-\mu)^2+4(k-\mu)}}{2}\,.
\end{align*}
Moreover, we have
\begin{equation}\label{02eqn:s<r<k}
s<r<k.
\end{equation}
\end{proposition}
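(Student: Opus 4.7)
The plan is to read off the eigenvalues of $A$ directly from Proposition \ref{02prop:minimal polynomial of A}. Since $A$ is a real symmetric matrix, it is diagonalizable, so the set of its eigenvalues coincides with the set of roots of its minimal polynomial. By Proposition \ref{02prop:minimal polynomial of A} the minimal polynomial is
\[
(X-k)\bigl(X^2+(\mu-\lambda)X+(\mu-k)\bigr),
\]
so the eigenvalues of $A$ are $k$ together with the two roots of the quadratic factor. Applying the quadratic formula to $X^2+(\mu-\lambda)X+(\mu-k)=0$ immediately yields the expressions for $s$ and $r$ stated in the proposition.

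Next I would verify that $s$ and $r$ are real and distinct, i.e.\ that the discriminant $D=(\lambda-\mu)^2+4(k-\mu)$ is strictly positive. By Proposition \ref{02prop:bounds of parameters} we have $\mu\le k$ and $\lambda\le k-2$. If $k>\mu$, then $4(k-\mu)>0$ and hence $D>0$. In the remaining case $k=\mu$, the bound $\lambda\le k-2$ forces $\lambda\ne\mu$, so $(\lambda-\mu)^2>0$ and again $D>0$. Consequently $s<r$.

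The last step is to establish $r<k$. Writing this as $\sqrt{D}<2k-(\lambda-\mu)$, I first note that the right-hand side is positive, since Proposition \ref{02prop:bounds of parameters} gives $2k-\lambda+\mu\ge 2k-(k-2)+1=k+3>0$; squaring is therefore legitimate. The resulting inequality simplifies (after cancelling $(\lambda-\mu)^2$) to
\[
k-\mu < k^2-k(\lambda-\mu),
\]
which rearranges to $k(\lambda-k+1)<\mu(k+1)$. The right-hand side is positive, while $\lambda\le k-2$ makes the left-hand side non-positive; hence the inequality holds and $r<k$ follows.

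The argument is essentially mechanical once the minimal polynomial is in hand; the only place where one needs to be careful is the strictness of the inequalities. In particular, showing $D>0$ requires the combined use of $\mu\le k$ and $\lambda\le k-2$ to rule out the degenerate case, and showing $r<k$ requires the bound $\lambda\le k-2$ to guarantee strictness. I would expect this strictness bookkeeping, rather than any genuine computation, to be the most delicate part of the proof.
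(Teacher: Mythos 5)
Your proposal is correct, and its first half is exactly the paper's route: read the eigenvalues off the minimal polynomial from Proposition \ref{02prop:minimal polynomial of A} and apply the quadratic formula. The two arguments diverge only in how the inequalities are verified. For $s<r$ the paper uses the single identity $(s-r)^2=(\lambda-\mu)^2+4(k-\mu)=(\lambda-\mu+2)^2+4(k-1-\lambda)>0$, which exhibits positivity in one stroke via $\lambda\le k-2$, whereas you split into the cases $k>\mu$ and $k=\mu$; both work. For $r<k$ the paper takes a more structural route: it computes $(k-s)(k-r)=k^2-(\lambda-\mu)k+\mu-k$ and, invoking the parameter relation $(n-k-1)\mu=(k-\lambda-1)k$ of Proposition \ref{02prop:relation among parameters}, identifies this product as $n\mu>0$; it then rules out $s,r>k$ by the trace condition $\mathrm{Tr}\,A=0$. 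You instead bound $r$ directly by showing $\sqrt{D}<2k-(\lambda-\mu)$, checking positivity of the right-hand side before squaring and reducing to $k(\lambda-k+1)<\mu(k+1)$, which follows from $\lambda\le k-2$ and $\mu\ge1$. Your version is more mechanical and avoids both the relation among the parameters and the trace argument; the paper's version yields the pleasant identity $(k-s)(k-r)=n\mu$ as a by-product. Both are complete; your strictness bookkeeping is handled correctly throughout.
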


\begin{proof}
The eigenvalues of $A$ are obtained from its minimal
polynomial in Proposition \ref{02prop:minimal polynomial of A}.
Since
\[
(s-r)^2
=(\lambda-\mu)^2+4(k-\mu)
=(\lambda-\mu+2)^2+4(k-1-\lambda)>0
\]
by Proposition \ref{02prop:bounds of parameters},
we obtain $s<r$.
Using $s+r=\lambda-\mu$ and $sr=\mu-k$, we have
\begin{align}
(k-s)(k-r)
&=k^2-(\lambda-\mu)k+\mu-k 
\nonumber \\
&=k(k-\lambda-1)+\mu(k+1) 
\nonumber \\
&=(n-k-1)\mu+\mu(k+1)
=n\mu>0,
\end{align}
where Proposition \ref{02prop:relation among parameters}
is taken into account.
We thus see that $k-s$ and $k-r$ have the same signature.
If $k<s$ and $k<r$,
all eigenvalues of $A$ become positive and
$\mathrm{Tr\,}A=0$ is not satisfied.
Consequently, we have $k>s$ and $k>r$,
and \eqref{02eqn:s<r<k}.
\end{proof}

\begin{proposition}\label{02prop:k,r,s are integers}
Let $G$ be 
a strongly regular graph $\mathrm{srg}(n,k,\lambda,\mu)$ with $\mu\ge1$.
If $2k+(n-1)(\lambda-\mu)\neq0$, then the three eigenvalues $k,r,s$
are all integers.
\end{proposition}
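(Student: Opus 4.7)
The plan is to exploit the trace condition on $A$ together with the spectral decomposition implied by Proposition \ref{02prop:minimal polynomial of A}. Let $f$ and $g$ denote the multiplicities of $r$ and $s$ respectively as eigenvalues of $A$. Since $k$ is a simple eigenvalue (its eigenvector is $\1$, and the remaining part of $V$ splits into the $r$- and $s$-eigenspaces because the minimal polynomial has three distinct linear factors), I would first record
\begin{equation*}
1 + f + g = n, \qquad k + fr + gs = \mathrm{Tr\,}A = 0.
\end{equation*}

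Next I would solve this linear system for $f$ and $g$ in terms of $k, r, s, n$. Eliminating $g = n-1-f$ from the trace equation and collecting terms yields $f(r-s) = -k-(n-1)s$, and symmetrically $g(r-s) = k+(n-1)r$. Subtracting these gives the key identity
\begin{equation*}
f - g = -\,\frac{2k+(n-1)(\lambda-\mu)}{r-s}\,,
\end{equation*}
using $r+s=\lambda-\mu$ from Proposition \ref{02prop:eigenvalues of A}. Since $f$ and $g$ are nonnegative integers and, by hypothesis, the numerator $2k+(n-1)(\lambda-\mu)$ is nonzero, the equation forces $f-g \neq 0$ and hence $r-s$ to be a nonzero \emph{rational} number.

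Now I would invoke that $(r-s)^2 = (\lambda-\mu)^2+4(k-\mu)$ is a nonnegative integer (by Proposition \ref{02prop:eigenvalues of A} and the fact that $k,\lambda,\mu\in\mathbb{Z}$). A rational number whose square is an integer must itself be an integer, so $r-s\in\mathbb{Z}$. The final step is a parity check: reducing $(r-s)^2\equiv(\lambda-\mu)^2\pmod 4$ shows $r-s$ and $\lambda-\mu$ have the same parity, so $r=\tfrac12((\lambda-\mu)+(r-s))$ and $s=\tfrac12((\lambda-\mu)-(r-s))$ are both integers. Together with $k\in\mathbb{Z}$ this finishes the proof.

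I expect no serious obstacle here; the only subtle point is justifying that $r-s$ rational implies $r-s$ integer, which I would handle by the elementary fact that $\mathbb{Z}$ is integrally closed in $\mathbb{Q}$ (a rational root of the monic integer polynomial $X^2-((\lambda-\mu)^2+4(k-\mu))$ must be an integer). Everything else is linear algebra and bookkeeping with the parameters of $\mathrm{srg}(n,k,\lambda,\mu)$.
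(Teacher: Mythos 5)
Your proposal is correct and follows essentially the same route as the paper: the same linear system for the multiplicities $f,g$, the same identity for $f-g$ forcing $r-s=\sqrt{(\lambda-\mu)^2+4(k-\mu)}$ to be rational and hence an integer, and the same parity argument to conclude $r,s\in\mathbb{Z}$. (Your sign in the $f-g$ formula is the correct one; the paper's displayed version differs by a sign, which is immaterial to the argument.)
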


\begin{proof}
We first note that the eigenvalue $k$ of $A$ is simple.
This is proved directly or by the Peron-Frobenius theorem.
Let $f$ and $g$ be the multiplicities of the eigenvalues 
$r$ and $s$, respectively.
Since the size of $A$ is $n$ and $\mathrm{Tr\,}A=0$, we have
\[
1+f+g=n, 
\qquad
k+rf+sg=0.
\]
Solving the above linear system, we obtain
\[
f=\frac{-s(n-1)-k}{r-s}\,,
\qquad
g=\frac{r(n-1)+k}{r-s}\,,
\]
where we note that $s<r$ by Proposition \ref{02prop:eigenvalues of A}.
Using the explicit expressions of $s$ and $r$ 
in Proposition \ref{02prop:eigenvalues of A}, we obtain
\begin{equation}\label{02eqn:f-g}
f-g=\frac{2k+(n-1)(\lambda-\mu)}{\sqrt{D}}\,,
\qquad
D=(\lambda-\mu)^2+4(k-\mu).
\end{equation}
Since $f-g$ is an integer and
$2k+(n-1)(\lambda-\mu)\neq0$ by assumption, 
we see that $\sqrt{D} \in\mathbb{Q}$.
Since $D\ge1$ is an integer, so is $\sqrt{D}$.
Thus, both $r-s=\sqrt{D}$ and $r+s=\lambda-\mu$ are
integers.
It is easy to see that the parities of $r-s$ and $r+s$
coincide.
Consequently, $s$ and $r$ are integers.
\end{proof}

\begin{proposition}\label{02prop:conference graph}
Let $G$ be 
a strongly regular graph $\mathrm{srg}(n,k,\lambda,\mu)$ with $\mu\ge1$.
If $2k+(n-1)(\lambda-\mu)=0$, then
$n\equiv 1 (\mathrm{mod}\, 4)$ and 
\begin{equation}\label{02eqn:conference graph}
k=\frac{n-1}{2}\,,
\qquad
\lambda=\frac{n-5}{4}\,,
\qquad
\mu=\frac{n-1}{4}.
\end{equation}
Moreover, 
\begin{equation}\label{02eqn:eigenvalues conference graph}
s=\frac{-1-\sqrt{n}}{2}\,,
\qquad
r=\frac{-1+\sqrt{n}}{2}\,,
\end{equation}
and their multiplicities match and become $(n-1)/2$.
\end{proposition}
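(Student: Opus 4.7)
The plan is to combine the explicit expression for $f-g$ in \eqref{02eqn:f-g} with the parameter identity of Proposition \ref{02prop:relation among parameters} and the bound $k\le n-2$ from Proposition \ref{02prop:bounds of parameters}, thereby pinning down all four parameters in terms of $n$ alone.

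First, I would substitute the hypothesis $2k+(n-1)(\lambda-\mu)=0$ directly into \eqref{02eqn:f-g}, which immediately yields $f-g=0$; combined with $f+g=n-1$ this gives $f=g=(n-1)/2$. Because the multiplicities are integers, $n$ must be odd, and in particular the multiplicity claim at the end of the proposition is already secured at this stage.

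Next, I would introduce $\alpha=\mu-\lambda=2k/(n-1)$. Since $\lambda,\mu\in\mathbb{Z}$ and $k>0$, $\alpha$ is a positive integer, and we may write $k=(n-1)\alpha/2$ and $\lambda=\mu-\alpha$. Substituting both into the identity $(n-k-1)\mu=(k-\lambda-1)k$ and carrying out a routine simplification should collapse everything into a single one-parameter relation of the form $4\mu=\alpha\bigl((n+1)\alpha-2\bigr)$.

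The delicate step, and what I expect to be the main obstacle, is to rule out $\alpha\ge 2$. Here I would invoke the bound $k\le n-2$ (which rests on $\mu\ge 1$), yielding $(n-1)\alpha/2\le n-2$, i.e., $\alpha\le 2(n-2)/(n-1)<2$. Thus $\alpha=1$ is forced, and this instantly gives $k=(n-1)/2$, $\mu=(n-1)/4$, and $\lambda=\mu-1=(n-5)/4$; integrality of $\mu$ then forces $n\equiv 1\pmod 4$. The eigenvalue formulas \eqref{02eqn:eigenvalues conference graph} drop out of Proposition \ref{02prop:eigenvalues of A} once one observes that the discriminant reduces to $(\lambda-\mu)^2+4(k-\mu)=1+(n-1)=n$.
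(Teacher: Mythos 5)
Your proposal is correct and follows essentially the same route as the paper: both arguments pin down $\mu-\lambda=1$ by combining $k=\tfrac12(n-1)(\mu-\lambda)$ with the bound $k\le n-2$ and integrality, then extract $\mu=(n-1)/4$ from the identity $(n-k-1)\mu=(k-\lambda-1)k$ and read off the eigenvalues from the discriminant $(\lambda-\mu)^2+4(k-\mu)=n$. The only (welcome) difference is that you justify the multiplicity claim explicitly via $f-g=0$ from \eqref{02eqn:f-g}, a computation that is indeed valid independently of the hypothesis of Proposition \ref{02prop:k,r,s are integers}, whereas the paper leaves that point implicit.
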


\begin{proof}
By assumption we have
\begin{equation}\label{02eqn:in proof prop2.11 (1)}
k=\frac{1}{2}(n-1)(\mu-\lambda).
\end{equation}
Using $2\le k\le n-2$ in Proposition \ref{02prop:bounds of parameters},
we obtain
\[
\frac{4}{n-1}\le \mu-\lambda
\le 2-\frac{2}{n-1}\,.
\]
Since $\lambda$ and $\mu$ are integers,
we obtain $n\ge5$ and $\mu-\lambda=1$.
Then $k=(n-1)/2$ by \eqref{02eqn:in proof prop2.11 (1)}
and \eqref{02eqn:conference graph} follows 
by Proposition \ref{02prop:relation among parameters}.
Finally, since $\lambda$ and $\mu$ are integers in 
\eqref{02eqn:conference graph},
it follows that $n\equiv 1 \,(\mathrm{mod}\, 4)$.
Finally, for \eqref{02eqn:eigenvalues conference graph} we only note
that $D=n$ by \eqref{02eqn:conference graph}
and use Proposition \ref{02prop:eigenvalues of A}.
\end{proof}

A strongly regular graph 
$\mathrm{srg}(n,k,\lambda,\mu)$
with parameters given as in \eqref{02eqn:conference graph} is
called a \textit{conference graph}.
However, it is not fully known 
which $n$'s have corresponding conference graphs.

\section{Distance Matrices}
\label{04sec:Distance Matrices}

\begin{proposition}\label{03prop:D=2J-2I-A}
Let $D$ be the distance matrix of a strongly regular graph
$G=\mathrm{srg}(n,k,\lambda,\mu)$ with $\mu\ge1$.
Then we have
\begin{equation}\label{03eqn:distance matrix}
D=\frac{1}{\mu}\,(2A^2-(2\lambda-\mu)A-2k I).
\end{equation}
\end{proposition}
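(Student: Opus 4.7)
The key observation is that, by Proposition \ref{02prop:diam=2}, a strongly regular graph with $\mu\ge 1$ has diameter $2$. Therefore the distance matrix takes only the values $0,1,2$, and can be decomposed as
\[
D = A + 2\bar{A},
\]
where $\bar{A}$ is the adjacency matrix of the complement $\bar{G}$, because entries equal to $1$ record adjacency in $G$ and entries equal to $2$ record pairs of distinct non-adjacent vertices, which are precisely the edges of $\bar{G}$.

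My plan is to substitute $\bar{A}=J-I-A$ (equation \eqref{02eqn:bar A}) into this decomposition to obtain
\[
D = A + 2(J-I-A) = 2J - 2I - A,
\]
and then eliminate $J$ using Proposition \ref{02prop:expression A2}. Since $\mu\ge 1$, we may solve $A^2=\mu J-(\mu-\lambda)A-(\mu-k)I$ for $J$, giving
\[
J = \frac{1}{\mu}\bigl(A^2+(\mu-\lambda)A+(\mu-k)I\bigr).
\]
Substituting this expression for $J$ and collecting terms with denominator $\mu$ should produce exactly the right-hand side of \eqref{03eqn:distance matrix}.

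The only step that requires a little care is the bookkeeping of the coefficients after substitution. The coefficient of $A$ should work out to $-(2\lambda-\mu)/\mu$, namely $2(\mu-\lambda)/\mu - 1 = (2\mu-2\lambda-\mu)/\mu = (\mu-2\lambda)/\mu$, and the coefficient of $I$ should work out to $-2k/\mu$, namely $2(\mu-k)/\mu - 2 = (2\mu - 2k - 2\mu)/\mu = -2k/\mu$. Since there are no genuine difficulties beyond this routine algebraic verification, I anticipate no real obstacle; the heart of the argument is the diameter-$2$ reduction, and the rest is a direct consequence of the identity already recorded in Proposition \ref{02prop:expression A2}.
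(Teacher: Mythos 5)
Your proposal is correct and follows essentially the same route as the paper: the paper likewise uses the diameter-$2$ fact to write $D=2J-2I-A$ and then eliminates $J$ via Proposition \ref{02prop:expression A2}, exactly as you do (your intermediate decomposition $D=A+2\bar{A}$ is just this identity made explicit). The coefficient bookkeeping you record is accurate, so there is nothing to add.
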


\begin{proof}
Since $\mathrm{diam}(G)=2$ by Proposition \ref{02prop:diam=2},
we have $D=2J-2I-A$.
Then, applying Proposition \ref{02prop:expression A2},
we obtain \eqref{03eqn:distance matrix}.
\end{proof}

\begin{proposition}\label{03prop:eigen(r)<eigen(s)<eigen(k)}
Let $G=\mathrm{srg}(n,k,\lambda,\mu)$ 
be a strongly regular graph with $\mu\ge1$,
and $s<r<k$ the eigenvalues of the
adjacency matrix as in Proposition \ref{02prop:eigenvalues of A}.
Then the eigenvalues of the distance matrix are given by
\begin{equation}\label{03eqn:ev(D)}
-r-2<-s-2<2(n-1)-k.
\end{equation}
\end{proposition}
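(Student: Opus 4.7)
The approach is to compute the spectrum of $D$ directly from the simple form $D=2J-2I-A$, which appears inside the proof of Proposition \ref{03prop:D=2J-2I-A} and is a direct consequence of $\mathrm{diam}(G)=2$. Since $G$ is $k$-regular and connected, the matrices $A$ and $J$ share the eigenvector $\1$ (with $A\1=k\1$ and $J\1=n\1$), and $Jv=0$ for every $v\perp \1$. Consequently $A$ and $J$ commute and are simultaneously diagonalizable in an orthonormal basis consisting of $\1/\sqrt{n}$ together with eigenvectors of $A$ lying in $\1^{\perp}$ with eigenvalues $r$ or $s$.

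Once this is in place, the eigenvalues of $D$ are read off directly. On the line spanned by $\1$ we have $D\1=2n\1-2\1-k\1=(2(n-1)-k)\1$. On $\1^{\perp}$, the $J$-contribution vanishes and $D$ acts as $-2I-A$, so each $A$-eigenvalue $r$ (resp.\ $s$) in that subspace yields a $D$-eigenvalue $-r-2$ (resp.\ $-s-2$). Since $\1^{\perp}$ is spanned by eigenvectors of $A$ for $r$ and $s$ (Proposition \ref{02prop:minimal polynomial of A} guarantees $A$ is diagonalizable on this subspace), these are the only other eigenvalues of $D$.

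It remains to verify the strict ordering. The left inequality $-r-2<-s-2$ is immediate from $s<r$ (Proposition \ref{02prop:eigenvalues of A}). The right inequality $-s-2<2(n-1)-k$ is equivalent to $k-s<2n$. Here I would invoke two facts: first, since $A$ is the adjacency matrix of a connected $k$-regular graph, the Perron--Frobenius theorem gives $|s|\le k$, so $-s\le k$; second, $k\le n-2$ by Proposition \ref{02prop:bounds of parameters}. Combining, $k-s\le 2k\le 2(n-2)<2n$, which is strict.

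The only potential obstacle is justifying that the three scalars $2(n-1)-k,\,-s-2,\,-r-2$ are genuinely distinct eigenvalues (rather than collapsing), but the chain of strict inequalities just established settles this automatically. The argument is otherwise a routine spectral decomposition exploiting the fact that $\1$ is a common eigenvector of $A$ and $J$.
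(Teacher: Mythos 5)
Your proof is correct, but it follows a genuinely different route from the paper's. The paper works from the quadratic expression $D=\frac{1}{\mu}(2A^2-(2\lambda-\mu)A-2kI)$ of Proposition \ref{03prop:D=2J-2I-A}, applies the spectral mapping theorem to the polynomial $\varphi(\xi)=\frac{1}{\mu}(2\xi^2-(2\lambda-\mu)\xi-2k)$, simplifies $\varphi(k)$, $\varphi(r)$, $\varphi(s)$ using the parameter identity \eqref{02eqn:relation among parameters} and the quadratic satisfied by $r$ and $s$, and then establishes $\varphi(s)<\varphi(k)$ by a parabola-vertex argument that reduces to verifying $2k-\lambda>\sqrt{(\lambda-\mu)^2+4(k-\mu)}$ through an algebraic identity. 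You instead bypass the quadratic entirely by decomposing $C(V)$ as $\mathbb{R}\1\oplus\1^{\perp}$ for the linear relation $D=2J-2I-A$: on $\1^{\perp}$ the matrix $J$ annihilates everything and $D$ acts as $-2I-A$, while $D\1=(2(n-1)-k)\1$. This gives the three eigenvalues with less computation, and your verification of $-s-2<2(n-1)-k$ via the Perron--Frobenius bound $|s|\le k$ together with $k\le n-2$ is cleaner and more conceptual than the paper's algebraic manipulation. Your approach does lean on two facts you should make explicit: that $k$ is a simple eigenvalue of $A$ with eigenspace $\mathbb{R}\1$ (so that $\1^{\perp}$ is exactly the sum of the $r$- and $s$-eigenspaces; the paper invokes this simplicity in the proof of Proposition \ref{02prop:k,r,s are integers}), and that both $r$ and $s$ genuinely occur as eigenvalues, which follows from the minimal polynomial being the full cubic in Proposition \ref{02prop:minimal polynomial of A}. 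With those points noted, the argument is complete; one could even argue it is preferable, since the paper's own proof of Proposition \ref{03prop:D=2J-2I-A} already passes through $D=2J-2I-A$ before converting to the polynomial form.
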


\begin{proof}
In view of \eqref{03eqn:distance matrix} we define
a polynomial by
\[
\varphi(\xi)=\frac{1}{\mu}\,(2\xi^2-(2\lambda-\mu)\xi-2k).
\]
Then by the spectral mapping theorem, 
the eigenvalues of the distance matrix $D$ are given by
$\{\varphi(s), \varphi(r), \varphi(k)\}$.
We start with $\varphi(k)$.
By definition we have
\begin{align*}
\varphi(k)
&=\frac{1}{\mu}\,(2k^2-(2\lambda-\mu)k-2k) \\
&=\frac{2}{\mu}\,k(k-\lambda-1)+k.
\end{align*}
With the help of Proposition \ref{02prop:relation among parameters}
we have
\[
\varphi(k)
=\frac{2}{\mu}\,\mu(n-k-1)+k
=2(n-1)-k.
\]
For $\varphi(r)$, we note that
$r^2=-(\mu-\lambda)r-(\mu-k)$, 
which follows by observing that
$r$ is a root of the equation
$\xi^2+(\mu-\lambda)\xi+(\mu-k)=0$.
Then we have
\begin{align*}
\varphi(r)
&=\frac{1}{\mu}\,(2r^2-(2\lambda-\mu)r-2k) \\
&=\frac{1}{\mu}\,(-2(\mu-\lambda)r-2(\mu-k)
-(2\lambda-\mu)r-2k) \\
&=-r-2.
\end{align*}
Similarly, we have $\varphi(s)=-s-2$.
Thus, the three numbers 
\[
\varphi(k)=2(n-1)-k,
\qquad
\varphi(r)=-r-2,
\qquad
\varphi(s)=-s-2
\]
are the eigenvalues of the distance matrix.

We next prove the inequalities in \eqref{03eqn:ev(D)}.
Since $s<r$ by Proposition \ref{02prop:eigenvalues of A},
we have $\varphi(r)<\varphi(s)$.
We will prove that $\varphi(s)<\varphi(k)$.
Note first that
\[
\varphi(\xi)
=\frac{2}{\mu}
\bigg(\xi-\frac{2\lambda-\mu}{4}\bigg)^2
-\frac{(\mu-2\lambda)^2+16k}{8\mu}\,,
\]
and
\[
s<\frac{2\lambda-\mu}{4}<k,
\]
which are verified directly.
Then for $\varphi(s)<\varphi(k)$ it is sufficient to show that
\begin{equation}\label{03eqn:distance from axis}
\frac{2\lambda-\mu}{4}-s<k-\frac{2\lambda-\mu}{4}\,.
\end{equation}
Using the explicit expression of $s$,
we see that \eqref{03eqn:distance from axis} is equivalent to
the following:
\begin{equation}\label{03eqn:in proof Prop3.2 (2)}
2k-\lambda>\sqrt{(\lambda-\mu)^2+4(k-\mu)}.
\end{equation}
This is verified by observing that
\[
(2k-\lambda)^2-\{(\lambda-\mu)^2+4(k-\mu)\}
=(2k-\mu)(2(k-2-\lambda)+\mu+2)+2\mu>0
\]
and both sides of \eqref{03eqn:in proof Prop3.2 (2)} are positive.
\end{proof}

\section{Quadratic Embedding Constants}
\label{05sec:Quadratic Embedding Constants}

In general, a connected graph $G=(V,E)$ 
is called \textit{transmission regular}
if its distance matrix $D$ has a constant row sum,
or equivalently if $\bm{1}\in C(V)$ is an eigenvector of $D$.
It follows from the Peron-Frobenius theorem that
the largest eigenvalue of $D$, denoted by $\delta_1(G)$, is simple
and the associated eigenspace is spanned by $\bm{1}$.
Since the eigenspaces of $D$ are mutually orthogonal,
the second largest eigenvalue $\delta_2(G)$ is obtained as
\[
\max\{\langle f,Df\rangle\,;\,f\in C(V),\,\,
\langle f,f\rangle=1,\,\,
\langle \bm{1}, f\rangle=0\},
\]
which coincides with $\mathrm{QEC}(G)$ by definition.
We thus come to the following noteworthy result.

\begin{proposition}\label{05prop:transmission regular QEC=delta2}
Let $G$ be a transmission regular grpah
and $\delta_2(G)$ denote the second largest eigenvalue of
the distance matrix $D$.
Then we have
\[
\mathrm{QEC}(G)=\delta_2(G).
\]
\end{proposition}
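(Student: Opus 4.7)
The plan is to turn the constrained maximization defining $\mathrm{QEC}(G)$ in \eqref{01eqn:def of QEC} into the maximum of a Rayleigh quotient on a $D$-invariant subspace, and then identify this maximum with the second largest eigenvalue. I would do this in three short steps. First, I would use transmission regularity to pin down the Perron eigenvector: by assumption $\bm{1}$ is an eigenvector of $D$, say $D\bm{1}=\delta\bm{1}$, and since $G$ is connected with $|V|\ge2$ the off-diagonal entries of $D$ are strictly positive, making $D$ an irreducible non-negative matrix. The Perron-Frobenius theorem then guarantees that the spectral radius of $D$ is a simple eigenvalue with a strictly positive eigenvector; since $\bm{1}$ is already positive and an eigenvector, it must be this Perron eigenvector, so $\delta=\delta_1(G)$ and $\delta_1(G)$ is simple.

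Second, I would exploit symmetry of $D$. Because $D$ is real symmetric and $\bm{1}$ spans the $\delta_1(G)$-eigenspace, the orthogonal complement $\{\bm{1}\}^{\perp}\subset C(V)$ is $D$-invariant, and the spectrum of the restriction of $D$ to it is exactly $\{\delta_2(G),\delta_3(G),\dots,\delta_n(G)\}$ counted with multiplicity. The Rayleigh-Ritz characterization applied on this invariant subspace then yields
\[
\delta_2(G)=\max\{\langle f,Df\rangle\,;\, f\in C(V),\, \langle f,f\rangle=1,\, \langle\bm{1},f\rangle=0\},
\]
and the right-hand side is precisely $\mathrm{QEC}(G)$ by \eqref{01eqn:def of QEC}.

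The argument is essentially bookkeeping and the proof should be quite short. The only substantive input is the Perron-Frobenius step, which is what ensures that the eigenvalue associated with $\bm{1}$ is the \emph{largest} one, so that restricting to $\{\bm{1}\}^{\perp}$ exposes $\delta_2(G)$ at the top. That is also the main (mild) obstacle: one must verify positivity of the off-diagonal entries of $D$, i.e.\ invoke that $G$ is connected with at least two vertices, before applying the theorem. Once $\delta_1(G)$ is identified as simple with eigenvector $\bm{1}$, the equality $\mathrm{QEC}(G)=\delta_2(G)$ follows immediately from the Courant-Fischer principle.
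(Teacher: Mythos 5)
Your proof is correct and follows essentially the same route as the paper: identify $\bm{1}$ as the Perron eigenvector of $D$ via the Perron--Frobenius theorem, so that $\delta_1(G)$ is simple with eigenspace spanned by $\bm{1}$, and then read off $\delta_2(G)$ as the maximum of the Rayleigh quotient on $\{\bm{1}\}^{\perp}$, which is $\mathrm{QEC}(G)$ by definition. You merely spell out a detail the paper leaves implicit (positivity of the off-diagonal entries of $D$, ensuring irreducibility), which is a welcome but not substantively different addition.
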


\begin{remark}\normalfont
For a general connected graph $G$, we have merely the inequality
$\delta_2(G)\le \mathrm{QEC}(G)<\delta_1(G)$.
It is an interesting open question to determine the class of
graphs $G$ such that $\mathrm{QEC}(G)=\delta_2(G)$.
This class contains all transmission regular graphs 
by Proposition \ref{05prop:transmission regular QEC=delta2}
and more.
For example, a path $P_n$ with an even $n\ge2$
is not transmission regular but
satisfies $\mathrm{QEC}(P_n)=\delta_2(P_n)$,
see \cite{Mlotkowski2022}.
\end{remark}

Now we go back to strongly regular graphs.

\begin{proposition}\label{05prop:srg is transmission regular}
Let $G$ be 
a strongly regular graph $\mathrm{srg}(n,k,\lambda,\mu)$ with $\mu\ge1$.
Then $G$ is transmission regular and
\[
D\bm{1}=\delta_1(G)\bm{1},
\qquad
\delta_1(G)=2n-k-2.
\]
\end{proposition}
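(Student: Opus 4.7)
The plan is to exploit the structural identity already used in the proof of Proposition \ref{03prop:D=2J-2I-A}. Since $\mathrm{diam}(G)=2$ by Proposition \ref{02prop:diam=2}, every off-diagonal entry of $D$ is either $1$ (for adjacent vertices) or $2$ (for non-adjacent vertices), which gives the matrix identity
\[
D=2J-2I-A,
\]
where $J$ is the all-one matrix and $A$ is the adjacency matrix of $G$.

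From here the calculation is essentially immediate. I would apply both sides to $\bm{1}\in C(V)$ and use the two well-known facts $J\bm{1}=n\bm{1}$ and $A\bm{1}=k\bm{1}$ (the latter holding because $G$ is $k$-regular). This yields
\[
D\bm{1}=2J\bm{1}-2\bm{1}-A\bm{1}=(2n-2-k)\bm{1},
\]
which shows that $D$ has constant row sum $2n-k-2$ and hence $G$ is transmission regular with $\bm{1}$ an eigenvector of $D$ for the eigenvalue $2n-k-2$.

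It remains to identify this eigenvalue as $\delta_1(G)$, the largest eigenvalue of $D$. By Proposition \ref{03prop:eigen(r)<eigen(s)<eigen(k)}, the three eigenvalues of $D$ are $-r-2<-s-2<2(n-1)-k$, so the largest is $2(n-1)-k=2n-k-2$, matching the value just found. This completes the identification $\delta_1(G)=2n-k-2$. There is no real obstacle here; the only thing to be careful about is citing the correct earlier results (the diameter-two fact and the spectral ordering from Proposition \ref{03prop:eigen(r)<eigen(s)<eigen(k)}) rather than redoing the eigenvalue computation.
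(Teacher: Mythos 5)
Your proposal is correct and follows essentially the same route as the paper: both use the identity $D=2J-2I-A$ (valid since $\mathrm{diam}(G)=2$) together with $k$-regularity to show the row sums of $D$ all equal $2n-k-2$. The only cosmetic difference is that you identify $2n-k-2$ as $\delta_1(G)$ by comparing with the explicit spectrum in Proposition \ref{03prop:eigen(r)<eigen(s)<eigen(k)}, whereas the paper relies on the Perron--Frobenius observation stated just before the proposition; both are valid.
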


\begin{proof}
By Proposition \ref{03prop:D=2J-2I-A} we have
\begin{equation}\label{03eqn:row sum distance matrix}
\sum_{y\in V} (D)_{xy}
=2n-2-\sum_{y\in V} (A)_{xy}
=2n-2-k,
\end{equation}
which means that $G$ is transmission regular and 
$\delta_1(G)=2n-k-2$.
\end{proof}

We now prove the main results 
(Theorerms \ref{01thm:main formula}--\ref{01thm:QEC<0}).

\begin{proof}[Proof of Theorem \ref{01thm:main formula}]
Since $G=\mathrm{srg}(n,k,\lambda,\mu)$ is a strongly regular graph
with $\mu\ge1$,
we see from Propositions 
\ref{05prop:transmission regular QEC=delta2}
and 
\ref{05prop:srg is transmission regular}
that $\mathrm{QEC}(G)=\delta_2(G)$.
On the other hand, it follows from 
Proposition \ref{03prop:eigen(r)<eigen(s)<eigen(k)} that
$\delta_2(G)=-s-2$, where $s$ is the minimal eigenvalue 
of the adjacency matrix $G$.
Finally, the formula
\begin{equation}\label{05eqn:main formula}
\mathrm{QEC}(G)
=-s-2
=-2-\frac{\lambda-\mu-\sqrt{(\lambda-\mu)^2+4(k-\mu)}}{2}
\end{equation}
follows from Proposition \ref{02prop:eigenvalues of A}.
\end{proof}

\begin{proof}[Proof of Theorem \ref{01thm:QEC le 0}]
In view of the formula in 
Theorem \ref{01thm:main formula}
or \eqref{05eqn:main formula}
we see that $\mathrm{QEC}(G)\le0$ if and only if 
\begin{equation}\label{04eqn:in proof Thm 4.5 (1)}
\sqrt{(\lambda-\mu)^2+4(k-\mu)}\le \lambda-\mu+4.
\end{equation}
Then, \eqref{04eqn:in proof Thm 4.5 (1)} is equivalent to the following
\begin{gather}
\lambda-\mu+4\ge0,
\label{04eqn:in proof Thm 4.5 (2)} \\
(\lambda-\mu)^2+4(k-\mu)\le (\lambda-\mu+4)^2.
\label{04eqn:in proof Thm 4.5 (3)}
\end{gather}
By simple algebra, \eqref{04eqn:in proof Thm 4.5 (3)} becomes
\begin{equation}\label{04eqn:in proof Thm 4.5 (4)}
k-2\lambda+\mu\le4.
\end{equation}
Note that \eqref{04eqn:in proof Thm 4.5 (2)} follows from
\eqref{04eqn:in proof Thm 4.5 (4)}.
In fact, from \eqref{04eqn:in proof Thm 4.5 (4)} we have
\[
\lambda-\mu+4 \ge k-\lambda \ge2,
\]
where Proposition \ref{02prop:bounds of parameters} is taken into account.
Finally, it is readily shown that
$\mathrm{QEC}(G)=0$ if and only if $k-2\lambda+\mu=4$.
\end{proof}

\begin{proof}[Proof of Theorem \ref{01thm:QEC<0}]
Suppose first that $2k+(n-1)(\lambda-\mu)=0$.
By Proposition \ref{02prop:conference graph} and 
the formula in 
Theorem \ref{01thm:main formula}
or \eqref{05eqn:main formula}
we see that $n\equiv 1 (\mathrm{mod}\,4)$ and
\[
\mathrm{QEC}(G)=-2-s=\frac{\sqrt{n}-3}{2}\,.
\]
Hence $\mathrm{QEC}(G)<0$ if and only if $n=5$,
that is, $G=C_5=\mathrm{srg}(5,2,0,1)$.

We recall \cite{Baskoro-Obata2021} that
$\mathrm{QEC}(G)\ge-1$ for any connected graph $G$
and the equality holds only when $G$ is a complete graph $K_n$
with $n\ge2$.
Now consider a strongly regular graph $G=\mathrm{srg}(n,k,\lambda,\mu)$
with $\mu\ge1$ and $2k+(n-1)(\lambda-\mu)\neq0$.
It follows from Proposition \ref{02prop:k,r,s are integers} 
that $s$ is an integer, so is $\mathrm{QEC}(G)$.
Therefore, $\mathrm{QEC}(G)<0$ means that $\mathrm{QEC}(G)=-1$
and $G$ is a complete graph.
But this contradict $G=\mathrm{srg}(n,k,\lambda,\mu)$
with $\mu\ge1$.
\end{proof}

During the above proof we have already established the following
result.

\begin{corollary}
Let $G$ be a strongly regular graph $\mathrm{srg}(n,k,\lambda,\mu)$
with $\mu\ge1$.
If $2k+(n-1)(\lambda-\mu)\neq0$,
then $\mathrm{QEC}(G)\ge0$ is a non-negative integer.
\end{corollary}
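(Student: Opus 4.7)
The plan is to observe that this corollary is not really a new statement; it is a repackaging of ingredients already assembled in the proof of Theorem~\ref{01thm:QEC<0}. I would therefore not redo any calculation, but rather string three earlier results together in the right order.

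First, I would invoke Theorem~\ref{01thm:main formula} (equivalently formula~\eqref{05eqn:main formula}) to rewrite $\mathrm{QEC}(G)=-s-2$, where $s$ is the minimum eigenvalue of the adjacency matrix $A$ of $G$. This reduces both claims of the corollary -- integrality and non-negativity -- to two separate questions about the single number $s$.

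Second, under the standing hypothesis $2k+(n-1)(\lambda-\mu)\neq 0$, Proposition~\ref{02prop:k,r,s are integers} asserts that $s\in\mathbb{Z}$. Hence $\mathrm{QEC}(G)=-s-2$ is automatically an integer, which handles the integrality half for free; only the sign remains to be checked. For this I would appeal to Theorem~\ref{01thm:QEC<0}, which states that the only strongly regular graph $\mathrm{srg}(n,k,\lambda,\mu)$ with $\mu\ge1$ satisfying $\mathrm{QEC}(G)<0$ is $C_5=\mathrm{srg}(5,2,0,1)$.

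The only point that deserves a sanity check -- and the mild obstacle to a purely mechanical citation chain -- is matching the two case distinctions: Theorem~\ref{01thm:QEC<0} excepts $C_5$, whereas the corollary excepts the conference-graph locus $2k+(n-1)(\lambda-\mu)=0$. I would therefore verify that $C_5$ lies on that locus, which is immediate: with $(n,k,\lambda,\mu)=(5,2,0,1)$ one gets $2k+(n-1)(\lambda-\mu)=4+4\cdot(-1)=0$. Consequently, our hypothesis excludes $C_5$, and Theorem~\ref{01thm:QEC<0} yields $\mathrm{QEC}(G)\ge 0$. Together with the integrality observation above, this completes the proof.
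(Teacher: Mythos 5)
Your argument is correct and in substance the same as the paper's: the paper simply remarks that the corollary was already established during the proof of Theorem~\ref{01thm:QEC<0}, where integrality comes from Proposition~\ref{02prop:k,r,s are integers} exactly as you use it. The only cosmetic difference is that for non-negativity the paper reruns the internal step of that proof (an integer $\mathrm{QEC}(G)\ge-1$ that cannot equal $-1$ since $G$ is not complete must be $\ge0$), whereas you cite the statement of Theorem~\ref{01thm:QEC<0} as a black box and verify that $C_5$ lies on the excluded locus $2k+(n-1)(\lambda-\mu)=0$; both routes are valid.
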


\section{Examples}
\label{06sec:Examples}

We collect some examples of strongly regular graphs with their
QE constants.
The computation is simple application 
of Theorem \ref{01thm:main formula}.

\begin{example}[cycles]\normalfont
Among cycles $C_n$ with $n\ge3$ only three are strongly regular graphs:
$C_3=K_3=\mathrm{srg}(3,2,1,*)$, 
$C_4=\mathrm{srg}(4,2,0,2)$ and $C_5=\mathrm{srg}(5,2,0,1)$.
Therefore, only $C_4$ and $C_5$ are strongly regular graphs:
$\mathrm{srg}(n,k,\lambda,\mu)$ with $\mu\ge1$.
We obtain
\[
\mathrm{QEC}(C_4)=0,
\qquad
\mathrm{QEC}(C_5)=\frac{-3+\sqrt{5}}{2}<0.
\]
The explicit value of $\mathrm{QEC}(C_n)$ for $n\ge3$ is known
\cite{Obata-Zakiyyah2018}.
Here we only recall that $\mathrm{QEC}(C_n)=0$ for an even $n\ge4$
and $\mathrm{QEC}(C_n)<0$ for an odd $n\ge3$.
\end{example}

\begin{example}[complete multipartite graphs]\normalfont
Let $p\ge2$ and $q\ge2$.
Let $K_{p\times q}=K_{q,q,\dots,q}$ ($q$ is repeated $p$ times) be
the complete multi-partite graph which consists of
$p$ islands and each island consists of $q$ vertices.
It is noted that $K_{p\times q}=\overline{pK_q}$, the complement of
the disjoint union of $p$ copies of the complete graph $K_q$.
We see that $K_{p\times q}$ is a strongly regular graph
with $\mathrm{srg}(pq,(p-1)q,(p-2)q,(p-1)q)$ and
the QE constant is given by
\[
\mathrm{QEC}(K_{p\times q})=q-2.
\]
In particular, $\mathrm{QEC}(K_{p\times2})=0$ for $p\ge2$.
The complete multipartite graph $K_{p\times 2}=\overline{pK_2}$ is 
also called the \textit{$p$-cocktail party graph}.
A formula for the QE constant of a general
complete multipartite graph $K_{m_1,m_2,\dots,m_k}$ is 
obtained in \cite{Obata2023b}.
\end{example}

\begin{example}[Conference graphs]\normalfont
A strongly regular graph 
is called a conference graph if its
parameter is of the form 
$\mathrm{srg}(n, (n-1)/2, (n-5)/4, (n-1)/4)$,
where $n\ge5$ and $n\equiv 1 (\mathrm{mod}\,4)$,
see Proposition \ref{02prop:conference graph}.
We have
\[
\mathrm{QEC}(G)=\frac{-3+\sqrt{n}}{2}\,.
\]
In particular, we see that 
all conference graphs are of non-QE class
except two cases $\mathrm{srg}(5,2,0,1)=C_5$ and
$\mathrm{srg}(9,4,1,2)$.
\end{example}

\begin{example}[line graphs of $K_n$]\normalfont
For $n\ge2$ the line graph of $K_n$, denoted by $L(K_n)$,
is a strongly regular graph with $\mathrm{srg}(n(n-1)/2,2(n-2),n-2,4)$.
We have 
\[
\mathrm{QEC}(L(K_n))=0, \qquad n\ge2.
\]
For $n=8$ we see that $L(K_8)$ has a parameter
$\mathrm{srg}(28, 12, 6,4)$,
which is shared with the Chang graphs,
see Example \ref{06ex:named graphs}.
It is also noted that the Petersen graph
is the complement of $L(K_5)$.
\end{example}

\begin{example}[line graphs of $K_{n,n}$]\normalfont
For $n\ge2$ the line graph of $K_{n,n}$, denoted by $L(K_{n,n})$,
is a strongly regular graph with $\mathrm{srg}(n^2,2n-2,n-2,2)$.
We have 
\[
\mathrm{QEC}(L(K_{n,n}))=0, \qquad n\ge2.
\]
The line graph $L(K_{n,n})$ is also called
a $n\times n$ square rook's graph.
\end{example}

\begin{example}[Some named graphs]\normalfont
\label{06ex:named graphs}
\,
\begin{center}
\begin{tabular}{|l|cccc|c|}\hline
graphs & $n$ & $k$ & $\lambda$ & $\mu$ & QEC \\
\hline
Petersen & 10 & 3 & 0 & 1 & 0
\\ \hline
Clebsch & 16 & 5 & 0 & 2 & 1
\\ \hline
Shrikhande & 16 & 6 & 2 & 2 & 0
\\ \hline
Schl\"afli & 27 & 16 & 10 & 8 & 0
\\ \hline
Changs & 28 & 12 & 6 & 4 & 0
\\ \hline
Hoffman-Singleton & 50 & 7 & 0 & 1 & 1
\\ \hline
Sims-Gewirtz & 56 & 10 & 0 & 2 & 2
\\ \hline
Brouwer-Haemers & 81 & 20 & 1 & 6 & 5
\\ \hline
Higman-Sims & 100 & 22 & 0 & 6 & 6
\\ \hline
\end{tabular}
\end{center}
\end{example}


\end{document}